\newtheorem{thm}{Theorem}[section]
\newtheorem{lem}[thm]{Lemma}
\newtheorem{prop}[thm]{Proposition}
\newtheorem{dfn}[thm]{Definition}
\newtheorem*{thm*}{Theorem}
\theoremstyle{remark}
\newtheorem*{rmks}{Remarks}
\renewcommand{\bf}[1]{\mathbf{#1}}
\renewcommand{\rm}[1]{\mathrm{#1}}
\newcommand{\bbZ}{\mathbb{Z}}
\newcommand{\sfE}{\mathsf{E}}
\renewcommand{\d}{\mathrm{d}}
\renewcommand{\L}{\Lambda}
\renewcommand{\a}{\alpha}
\renewcommand{\b}{\beta}
\renewcommand{\l}{\lambda}
\newcommand{\s}{\sigma}
\renewcommand{\phi}{\varphi}
\newcommand{\fin}{\nolinebreak\hspace{\stretch{1}}$\lhd$}
\renewcommand{\t}[1]{\tilde{#1}}
\renewcommand{\to}{\longrightarrow}
\newcommand{\actson}{\curvearrowright}
\newcommand{\notto}{\,\,\not\!\!\to}
\newcommand{\uhr}{\!\upharpoonright}
\begin{document}

\title{\textbf{A proof of Walsh's convergence theorem using couplings}}

\author{Tim Austin\footnote{Research supported by a fellowship from the Clay Mathematics Institute}\\ \\ \small{Courant Institute, New York University}\\ \small{New York, NY 10012, USA}\\ \small{\texttt{tim@cims.nyu.edu}}}

\date{}

\maketitle




\begin{abstract}
Walsh has recently proved the norm convergence of all nonconventional ergodic averages involving polynomial sequences in discrete nilpotent acting groups. He deduces this convergence from an equivalent, `finitary' assertion of stability over arbitrarily long time-intervals for these averages, which is proved by essentially finitary means. The present paper shows how the induction at the heart of Walsh's proof can also be implemented using more classical notions of ergodic theory: in particular, couplings and characteristic factors.
\end{abstract}

\section{Introduction}

In his recent breakthrough paper~\cite{Walsh12}, Walsh proved the following convergence result for nonconventional ergodic averages.

\begin{thm}\label{thm:A}
Suppose that $G$ is a countable discrete nilpotent group, that $T:G\actson (X,\mu)$ is a probability-preserving action on a standard Borel space, that $p_1,\ldots,p_k:\bbZ \to G$ is a tuple of polynomial sequences, and that $f_1,\ldots,f_k \in L^\infty(\mu)$.  Then the averages
\[\frac{1}{N}\sum_{n=1}^N (f_1\circ T^{p_1(n)})(f_2\circ T^{p_2(n)})\cdots (f_k\circ T^{p_k(n)})\]
converge in $\|\cdot\|_2$ as $N\to\infty$.
\end{thm}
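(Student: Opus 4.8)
The plan is to retain the overall architecture of Walsh's argument --- an induction that steadily lowers the ``complexity'' of the tuple of polynomial sequences --- while replacing his finitary stability estimates by soft limiting arguments built from joinings (couplings) and characteristic factors. Following Walsh, I would attach to each finite tuple $\vec p=(p_1,\dots,p_k)$ of polynomial maps $\bbZ^r\to G$ a \emph{complexity} taking values in a well-ordered set, arranged, via Leibman's calculus of polynomial sequences in nilpotent groups, so that it drops strictly when one passes to a ``differenced'' tuple. The theorem is then proved by induction on this complexity, with the inductive statement formulated to be self-sustaining: $\|\cdot\|_2$-convergence of $\frac1N\sum_{n=1}^N\prod_i f_i\circ T^{p_i(n)}$ (and of the analogous box-averages when $r>1$) for every countable discrete nilpotent $G$, every probability-preserving $G$-action, and every bounded $\vec f$. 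Admitting several averaging variables is a technical convenience, so that the joint $n$-and-$h$ averages produced below stay within the scope of the hypothesis, and admitting an arbitrary probability-preserving action means the hypothesis can be re-applied on product systems and on joinings.

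In the inductive step, the base case of minimal complexity reduces, after the usual normalisations, to a single polynomial ergodic average, handled by the mean ergodic theorem together with the equidistribution of polynomial orbits in nilsystems. For positive complexity I would apply the Hilbert-space van der Corput estimate to $A_N(\vec f):=\frac1N\sum_{n=1}^N\prod_i f_i\circ T^{p_i(n)}$. This bounds $\limsup_N\|A_N(\vec f)\|_2$ by an $h$-averaged family of quantities $\frac1N\sum_n\langle\prod_i f_i\circ T^{p_i(n+h)},\,\prod_i f_i\circ T^{p_i(n)}\rangle$; rewriting each inner product as an integral over $(X\times X,\mu\times\mu)$ and applying the differencing calculus to the $G$-valued maps $p_i(\cdot+h)$ and $p_i(\cdot)$, one identifies it with the integral of a nonconventional average on $X\times X$ for a tuple $\vec p^{\,(h)}$ of strictly smaller complexity. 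By the induction hypothesis these converge, and, treating $h$ as a further averaging variable, their $h$-average is likewise controlled; the conclusion is that there is a factor $\mathsf{Z}\subseteq X$, assembled from the limiting joinings that govern the differenced averages, which is \emph{characteristic}: $\|A_N(\vec f)\|_2\to 0$ as soon as some $f_i$ is orthogonal to $\mathsf{Z}$. Hence the asymptotics of $A_N(\vec f)$ are unchanged on replacing each $f_i$ by $\bbE(f_i\,|\,\mathsf{Z})$.

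This is where couplings do the essential work. The factor $\mathsf{Z}$ is not, a priori, a factor of $(X,\mu,T)$: it is most naturally constructed inside a joining or an extension. To stop the induction from generating an unbounded tower of extensions, I would first replace $(X,\mu,T)$ by an extension that is \emph{sated} with respect to the countable family of factor-operations that can occur --- an inverse-limit construction in the spirit of the pleasant/sated-extension technology --- after which one may assume that $\mathsf{Z}$ lies inside $X$ and, moreover, that $\mathsf{Z}$ is relatively characteristic over its own lower-complexity characteristic subfactors. Convergence of $A_N$ evaluated at $\mathsf{Z}$-measurable functions then follows either from the induction hypothesis, the tuple acting on $\mathsf{Z}$ with strictly smaller complexity, or from the concrete structure of $\mathsf{Z}$ as an iterated tower of isometric, nilsystem-type extensions, on which the averages converge by direct computation; combined with the reduction of the previous paragraph this gives the convergence of $A_N(\vec f)$ and closes the induction.

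The main obstacle will be making this combined induction genuinely well-founded and its constructions mutually coherent. The complexity induction on $\vec p$ has to be interleaved with passages to sated extensions of the acting system in such a way that those extensions --- which alter only the underlying space, never $\vec p$ --- leave the complexity bookkeeping untouched and do not spawn an infinite regress; satedness is precisely the device that terminates the regress, and all of this must be set up for actions of an arbitrary countable discrete nilpotent group, where the relevant joining and inverse-limit constructions need to be re-examined rather than quoted. The residual technical burden is Leibman's differencing calculus: one must follow the passage from $\vec p$ to $\vec p^{\,(h)}$ with enough precision, in the noncommutative setting, to be certain the complexity strictly decreases --- the step that most directly reinterprets the combinatorial core of Walsh's proof.
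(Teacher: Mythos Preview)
Your outline captures the shape of the induction but has a genuine gap at the crux of the argument: the step where you claim convergence of $A_N(\vec f)$ once each $f_i$ is replaced by its projection onto the characteristic object $\mathsf{Z}$. You offer two routes --- that the tuple acting on $\mathsf{Z}$ has strictly smaller complexity, or that $\mathsf{Z}$ is a tower of isometric/nilsystem-type extensions --- and neither is available. Complexity is an invariant of the tuple $(p_1,\dots,p_k)$ alone, not of the system, so restricting to a factor cannot lower it; and a Host--Kra style structure theorem identifying the characteristic factor with a nilsystem is precisely what is \emph{not} known for polynomial averages along actions of general nilpotent groups. Indeed, the whole point of Walsh's breakthrough was that it bypassed the need for such a structure theorem, and any ``ergodic-theoretic'' reworking must bypass it too.

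The paper closes this gap in a way your outline does not anticipate. The characteristic object is not a factor $\mathsf{Z}$ at all, but the closed linear span of certain explicit \emph{basic functions}, each of which is by definition a conditional expectation arising from a coupling $\l$ of $(X,\mu,T)$ with a canonical $\t{H}$-process on $([-1,1]^k)^{\t{G}}$. The coupling is what does the work: when the last function in the average is basic, one can lift the average to the coupled system, use the $(\rm{id}\times S^\a)$-invariance of $\l$ together with an algebraic identity (Lemma~\ref{lem:canon-rearrange}) to \emph{rewrite} the lifted average as a multiple average for the reduced $\t{G}$-system $\bf{p}^\ast$, and only then invoke the induction hypothesis. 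The paper remarks explicitly that this span of basic functions need not come from a $\s$-subalgebra --- products of basic functions are not obviously basic, because their defining couplings may differ --- so your proposed passage through a factor and sated extensions is not just a different packaging but would require a substantially new idea. Your base case is also off: at complexity zero every $p_i$ is constant and the averages do not depend on $N$, so no appeal to nilsystem equidistribution is needed or possible.
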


This answers a question first published by Bergelson and Leibman in~\cite{BerLei02}, and before that promoted in person by Furstenberg.  It caps a long sequence of partial results: more complete references are given in Walsh's paper.  For $k\geq 2$, the corresponding question of pointwise convergence has been resolved in a few special cases~\cite{Bou90,Ass10,ChuFra12}, but those methods seem to fall far short of the general case.

Unlike most earlier approaches to such questions, Walsh first converts this problem into a more `finitary' one.  It asks for some quantitative control (at least in principle) over how long one must wait before reaching a very long interval of times $N$ throughout which the averages are very stable.  This conversion bears some resemblance to Tao's earlier proof of an important special case in~\cite{Tao08(nonconv)}, but Walsh does not use a completely finitary approach, as Tao does.  More recently, Zorin-Kranich has shown in~\cite{Zor13} how to extend Walsh's result to mappings defined on a more general amenable group, and, more substantially, to averages taken over arbitrary shifted F\o lner sets in that group.

The present note will show how Theorem~\ref{thm:A} can also be proved using some of the ergodic theoretic machinery from those earlier works.  The heart of the proof is still the method of induction newly introduced by Walsh, but the technical superstructure will appear rather different.  In particular, we avoid the conversion of Theorem~\ref{thm:A} into a finitary assertion about the stability of these averages over long time-intervals, as in~\cite[Theorem 3.2]{Walsh12}.  Instead, we show how Walsh's argument can be mimicked in terms of measurable functions on the initially-given probability-preserving system and some extensiosn of it.  Thus, we will assume without proof the main results about finite-complexity tuples from~\cite{Walsh12}.

I believe that it would be easy to generalize the proof below to Zorin-Kranich's setting in~\cite{Zor13}, if the acting groups are all assumed to be discrete. For non-discrete groups some extra technical considerations might arise.   For the sake of simplicity, we will not pursue that extra generality here.

\section{Background}\label{sec:background}

\subsection{Measure theory and ergodic theory}

We shall work throughout with probability-preserving, homeomorphic actions of countable groups on compact metric spaces.  At the level of measure algebras, any probability-preserving action on a countably-generated probability space may be modelled by such a topological action, so this incurs no loss of generality.  If $G$ is the group, then such an action will be calld a \textbf{$G$-space}.  Standard constructions involving these, such as factors, extensions and joinings, will be taken for granted; Glasner's book~\cite{Gla03} provides a thorough reference.

We will also need the following fairly recent result about this setting, concerning the extension of systems to recover actions of larger groups. If $H \leq G$ is an inclusion of countable groups and $(X,\mu,T)$ is a $G$-space, then we write $T^{\uhr H}$ for the restriction of the action $T$ to the subgroup $H$.

\begin{thm}[{\cite[Theorem 2.1]{Aus--commute}}]\label{thm:recoverG}
Suppose $H \leq G$ is an inclusion of countable groups, that $(X,\mu,T)$ is a $G$-space and that
\[(Y,\nu,S) \stackrel{\b}{\to} (X,\mu,T^{\uhr H})\]
is an extension of $H$-spaces.  Then there is an extension of $G$-spaces $(Z,\theta,R)\stackrel{\pi}{\to} (X,\mu,T)$ which admits a commutative diagram of $H$-spaces
\begin{center}
$\phantom{i}$\xymatrix{
(Z,\theta,R^{\uhr H}) \ar^{\pi}[rr]\ar_\a[dr] && (X,\mu,T^{\uhr H})\\
& (Y,\nu,S)\ar_\b[ur].
}
\end{center}
\qed
\end{thm}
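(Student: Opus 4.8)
The plan is to realise $(Z,\theta,R)$ as a \emph{relative co-induction} of $(Y,\nu,S)$ from $H$ to $G$ over the base $(X,\mu,T)$. Fix a set $\mathcal{T}\subseteq G$ of representatives for the right cosets $H\backslash G$ with $e\in\mathcal{T}$, so that each $g\in G$ has a unique factorisation $g = h(g)t(g)$ with $h(g)\in H$, $t(g)\in\mathcal{T}$. Let $Z\subseteq X\times Y^G$ be the set of pairs $(x,\phi)$ for which $\phi(hg) = S^h\phi(g)$ for all $h\in H$, $g\in G$, and $\beta(\phi(g)) = T^g x$ for all $g\in G$. As $G$ is countable, $Y^G$ is compact metrizable and these are closed conditions, so $Z$ is a compact metric space. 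Let $G$ act by $R^g(x,\phi) := \big(T^g x,\ g'\mapsto\phi(g'g)\big)$, and set $\pi(x,\phi) := x$, $\alpha(x,\phi):=\phi(e)$. One checks directly that $R$ is a continuous action, $\pi$ is $G$-equivariant, $\alpha$ is $H$-equivariant, and $\beta\circ\alpha=\pi$, so the required triangle already commutes at the level of topological systems. (Invariantly, $Z$ is the fibre product of $X$ and the co-induced $G$-space $\mathrm{CoInd}_H^G Y$ over $\mathrm{CoInd}_H^G X$, taken along the canonical $G$-embedding $x\mapsto(g\mapsto T^gx)$.)

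It remains to equip $Z$ with an $R$-invariant Borel probability $\theta$ such that $\pi_*\theta=\mu$ and $\alpha_*\theta=\nu$. Evaluation on $\mathcal{T}$ identifies the fibre $\pi^{-1}(x)$ with $\prod_{t\in\mathcal{T}}\beta^{-1}(T^tx)$, since the relation $\phi(hg)=S^h\phi(g)$ reconstructs $\phi$ from $(\phi(t))_{t\in\mathcal{T}}$. Passing (harmlessly) to an $H$-invariant conull subsystem, let $\nu=\int_X\nu_x\,\d\mu(x)$ be the disintegration of $\nu$ over $\beta$, so $\nu_x$ is carried by $\beta^{-1}(x)$ and $S^h_*\nu_x=\nu_{T^hx}$ for all $x$ and all $h\in H$. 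Define $\theta_x:=\bigotimes_{t\in\mathcal{T}}\nu_{T^tx}$ on $\pi^{-1}(x)$ and $\theta:=\int_X\theta_x\,\d\mu(x)$. Then $\pi_*\theta=\mu$ by construction, and since $e\in\mathcal{T}$ makes the $e$-coordinate of $\theta_x$ equal to $\nu_x$, also $\alpha_*\theta=\int_X\nu_x\,\d\mu(x)=\nu$.

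The crux, and where I expect the real work, is $R$-invariance of $\theta$. For $g\in G$, the map $R^g\colon\pi^{-1}(x)\to\pi^{-1}(T^gx)$ sends $(\phi(t))_t$ to $(\phi(tg))_t$; writing $tg=h_{t,g}t'_{t,g}$ with $h_{t,g}\in H$ and $t'_{t,g}\in\mathcal{T}$, the new $t$-coordinate is $S^{h_{t,g}}\phi(t'_{t,g})$, and $t\mapsto t'_{t,g}$ is a bijection of $\mathcal{T}$ realising the permutation of $H\backslash G$ by right multiplication by $g$. Hence $R^g$ carries the independent product $\bigotimes_t\nu_{T^tx}$ to the independent product whose $t$-coordinate has law $S^{h_{t,g}}_*\nu_{T^{t'_{t,g}}x}=\nu_{T^{h_{t,g}t'_{t,g}}x}=\nu_{T^{tg}x}$, by $H$-equivariance of the disintegration; that is, $R^g_*\theta_x=\theta_{T^gx}$. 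Integrating over $x$ and using $T$-invariance of $\mu$ gives $R^g_*\theta=\theta$. Therefore $(Z,\theta,R)$ is a $G$-space, $\pi$ is a factor map of $G$-spaces and $\alpha$ is one of $H$-spaces, which is what was claimed.

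Beyond this the only points needing care are measure-theoretic housekeeping: that the disintegration over $\beta$ can be chosen honestly $H$-equivariant (which is why we pass to an $H$-invariant conull subsystem, permissible since all our systems may be normalised in this way), and that $x\mapsto\theta_x$ is a measurable map into the probability measures on $Z$ --- immediate from measurability of $x\mapsto\nu_x$ and countability of $\mathcal{T}$. The algebraic substance is entirely in the co-induction computation of the preceding paragraph.
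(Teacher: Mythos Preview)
The paper does not actually prove this theorem: it is quoted from~\cite{Aus--commute} and closed with a bare \qed, so there is no in-paper argument to compare against. Your proof supplies what the paper omits, and it is correct. The construction you give is precisely the \emph{relative co-induction} (also called relatively independent co-induction) of the $H$-extension $\beta:(Y,\nu,S)\to(X,\mu,T^{\uhr H})$ up to $G$, and this is the standard route to results of this type; it is in fact the argument used in the cited reference.

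A couple of small points of housekeeping, none of which affect the substance. First, your claim that $Z$ is closed in $X\times Y^G$ uses continuity of $\beta$, which the paper's framework does not guarantee for factor maps; but this is irrelevant, since Borel-measurability of $\beta$ already makes $Z$ a Borel subset of a Polish space, hence a standard Borel space, which is all that is needed. Second, the $H$-equivariance of the disintegration $x\mapsto\nu_x$ holds a priori only $\mu$-almost everywhere for each fixed $h\in H$; countability of $H$ lets you pass to a single $H$-invariant conull set on which it holds simultaneously for all $h$, exactly as you indicate. With these caveats, the verification that $R^g_\ast\theta_x=\theta_{T^gx}$ via the cocycle decomposition $tg=h_{t,g}t'_{t,g}$ is clean and complete.
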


Another slightly less standard notion that we will use is the following. Suppose that $X$ and $Y$ are compact metric spaces and that $\mu\in \Pr(X)$, let $\pi:X\times Y\to X$ be the coordinate-projection, and let
\[Q(\mu,Y) := \{\l \in \Pr(X\times Y):\ \pi_\ast\l = \mu\},\]
the set of probability measures on $X\times Y$ that extend $\mu$.  It always contains $\mu\otimes \delta_y$ for $y \in Y$, so it is a nonempty convex subset of $\Pr(X\times Y)$. It is closed for the vague topology, because $\pi_\ast$ acts continuously on measures for this topology.  Also, for any $f \in L^\infty(\mu)$ and $h \in C(Y)$, one may find $g_n \in C(X)$ such that $g_n\to f$ in $L^1(\mu)$, and it follows that the functional on $Q(\mu,Y)$ defined by
\[\l \mapsto \int_{X\times Y} f(x)h(y)\,\l(\d x,\d y)\]
is the uniform limit of the functionals
\[\l \mapsto \int_{X\times Y} f_n(x)h(y)\,\l(\d x,\d y).\]
Therefore, the former functional is continuous on $Q(\mu,Y)$, even if $f$ is not itself continuous.

This, this restricted topology on $Q(\mu,Y)$ is something of a hybrid between the vague topology and the joining topology (see, for instance,~\cite[Section 6.1]{Gla03}), so we refer to it as the \textbf{hybrid topology}.

\subsection{Finite-complexity systems, and construction of a new group}

Theorem~\ref{thm:A} will be proved by induction on the `complexity' of the tuple of functions $p_1$, \ldots, $p_k$.  A key feature of the induction is that the conversion to a simpler tuple of functions is achieved at the expense of greatly enlarging the group.  Our basic notation and definitions will follow~\cite[Section 3]{Walsh12}.

Fix now a countable discrete group $G$.  A \textbf{$G$-sequence} is a function $\bbZ\to G$, and a \textbf{$G$-system} is a finite tuple of $G$-sequences.  The set $G^\bbZ$ of $G$-sequences is itself a group with coordinate-wise operations, and it is naturally endowed with the \textbf{shift automorphism} $\a$:
\[\a(p)(n) := p(n-1).\]
Also, let $\iota:G\to G^\bbZ$ be the embedding as the group of constant functions, which is precisely the subgroup of elements of $G^\bbZ$ fixed by $\a$. We will write $e$ for the identity element of both $G$ and $G^\bbZ$.

If $p \in G^\bbZ$, then
\[D_mp(n) := p(n+m)^{-1}p(n) = \a^{-n}(p^{-1})(m)\cdot p(n),\]
and if also $q \in G^\bbZ$, then
\[\langle p|q\rangle_m(n) := q(n+m) \cdot D_mp(n) = \a^{-n}(q p^{-1})(m) \cdot p(n).\]
Importantly, in this work we will consider these brackets combined into a single map
\[\langle p|q\rangle:\bbZ\to G^\bbZ:n\mapsto (\langle p|q\rangle_m(n))_{m\in\bbZ} = \a^{-n}(qp^{-1})\cdot \iota(p(n)),\] 
whereas Walsh just works with all the maps $n\to \langle p|q\rangle_m$ for $m\in\bbZ$ separately.  Note also that the multiplication in our expressions here is reversed compared to Walsh's: this is because we focus on the group action on the underlying space $X$, whereas he works consistently with the associated Koopman representation.

In terms of this construction, if $\bf{p} := (p_1,\ldots,p_k)$ is a $G$-system, then its \textbf{reduction} is the $G^\bbZ$-system
\[\bf{p}^\ast := (\iota\circ p_1,\ldots, \iota\circ p_{k-1},\langle p_k|e\rangle,\ldots,\langle p_k|p_{k-1}\rangle).\]

A $G$-system $\bf{p}$ is trivial if all its entries are constant (that is, elements of $\iota(G)$).  The $G$-systems has \textbf{finite complexity} if it may be converted into a trivial system by a finite sequence of operations, each of which is either a reduction, a re-ordering, or a removal of duplicated entries. The value of its \textbf{complexity} is the least number of reductions needed in this sequence of operations to reach a tuple of constants.  This is essentially~\cite[Definition 3.1]{Walsh12}, except that at each step Walsh must consider all the possible reductions
\[\bf{p}_m^\ast := (p_1,\ldots, p_{k-1},\langle p_k|e\rangle_m,\ldots,\langle p_k|p_{k-1}\rangle_m)\]
for different $m \in \bbZ$, whereas we combine these into a single $G^\bbZ$-system.  It is easy to see that our notion of complexity still agrees with his.

It is shown in~\cite[Theorem 4.2]{Walsh12} that every tuple of polynomial mappings into a nilpotent group has finite complexity.  Therefore, as in that paper, Theorem~\ref{thm:A} is a special case of the following:

\begin{thm}\label{thm:B}
Suppose that $G$ is a countable discrete group, that $T:G\actson (X,\mu)$ is a probability-preserving, topological action on a compact metric space, that $\bf{p} = (p_1,\ldots,p_k):\bbZ \to G$ is a finite-complexity $G$-system, and that $f_1,\ldots,f_k \in L^\infty(\mu)$.  Then the averages
\[\frac{1}{N}\sum_{n=1}^N (f_1\circ T^{p_1(n)})(f_2\circ T^{p_2(n)})\cdots (f_k\circ T^{p_k(n)})\]
converge in $\|\cdot\|_2$ as $N\to\infty$.
\end{thm}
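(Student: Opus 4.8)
The plan is to prove Theorem~\ref{thm:B} by induction on the complexity of the $G$-system $\bf{p}$, exactly mirroring the strategy of~\cite{Walsh12} but carried out on measurable functions rather than on finitary configurations. In the base case of complexity zero, all the $p_i$ are constant, say $p_i \equiv g_i \in G$, so each averaged term is simply the fixed function $(f_1\circ T^{g_1})\cdots(f_k\circ T^{g_k})$; the Ces\`aro averages are then constant in $N$ and convergence is trivial. For the inductive step, suppose the theorem holds for all $G'$-systems of complexity $<c$ over all groups $G'$, and let $\bf{p}$ be a $G$-system of complexity $c$. The key identity to exploit is the van der Corput / PET-type manipulation: comparing the average $A_N(\bf{f}) := \frac1N\sum_{n\le N}\prod_i f_i\circ T^{p_i(n)}$ with its shifts, one bounds $\limsup_N \|A_N(\bf{f})\|_2$ by an expression built from averages indexed by the reduced system $\bf{p}^\ast$, which has complexity $c-1$ but as a $G^\bbZ$-system. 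This is where the enlargement of the group enters.

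The first real step is therefore to set up the van der Corput estimate carefully in the noncommutative, topological setting. One forms the double average $\frac1M\sum_{m\le M} \langle A_{N}(\bf{f}), A_N(\bf{f}\text{ shifted by }m)\rangle$, expands the inner product, and recognizes the resulting quantity as an ergodic average for the reduced tuple $\bf{p}^\ast$ applied to functions of the form $f_i, \ol{f_i}$ and products thereof, now regarded as living on a product system. The combinatorial bookkeeping here is essentially that of Walsh, and our single-map formulation of $\langle p|q\rangle:\bbZ\to G^\bbZ$ is designed precisely so that the reduced averages genuinely become $G^\bbZ$-averages to which the inductive hypothesis can be applied verbatim. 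The second step is the passage from convergence of the reduced averages to convergence of the original ones: knowing $\limsup_N\|A_N(\bf f) - A_{N'}(\bf f)\|_2$ is controlled by reduced averages that converge, one concludes $(A_N(\bf f))_N$ is Cauchy, hence convergent. The subtlety is that the inductive hypothesis gives convergence of the reduced averages only over an \emph{extension} of the original $G$-space (or of a $G^\bbZ$-space built from it), so Theorem~\ref{thm:recoverG} is used to lift the $G$-action appropriately and Theorem~\ref{thm:A}'s abstract machinery of factors/joinings lets us transport the conclusion back down.

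The main obstacle, and the place where couplings and characteristic factors do the work that the finitary stability statement does in Walsh's paper, is the bootstrapping of a \emph{qualitative} van der Corput bound into an actual convergence proof. Naively, a van der Corput inequality only tells you that $\limsup\|A_N(\bf f)\|_2$ is small when certain reduced averages vanish; to get Cauchyness of $(A_N(\bf f))_N$ one must iterate this along a decreasing sequence of characteristic factors and take a limit of the corresponding couplings in the hybrid topology on $Q(\mu,Y)$ introduced above. Concretely, I expect to argue by contradiction: if $(A_N(\bf f))_N$ were not Cauchy, one could extract two subsequences whose averages stay $\eps$-apart, build a coupling of the system with itself (or with a product of shifted copies) that records this discrepancy, pass to a limit coupling using compactness of $Q(\mu,Y)$ in the hybrid topology, and then apply the reduced-complexity convergence on that limit object to derive a contradiction. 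Making this limiting argument precise — in particular, checking that the relevant functionals are continuous in the hybrid topology (which is exactly why we proved that continuity above) and that the inductive hypothesis really does apply to the limit coupling after invoking Theorem~\ref{thm:recoverG} — will be the technical heart of the paper.
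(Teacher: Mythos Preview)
Your proposal has all the right raw ingredients --- induction on complexity, the reduction $\bf{p}^\ast$, couplings, the hybrid topology, and the appeal to Theorem~\ref{thm:recoverG} --- but the architecture you sketch has a real gap, and the paper's actual proof is organised quite differently.

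The problem is in your second paragraph's transition from ``reduced averages converge'' to ``original averages are Cauchy''. A van der Corput inequality controls only $\limsup_N\|A_N(\bf f)\|_2$ in terms of reduced averages; even when those reduced averages converge, you learn nothing about whether $(A_N)_N$ is Cauchy unless the limits of the reduced averages happen to vanish. Your proposed fix --- assume non-Cauchy, extract two subsequences, build a limit coupling, and apply the inductive hypothesis there to reach a contradiction --- is not obviously workable: it is unclear what single object the two diverging subsequences should produce, and unclear what statement about that limit coupling contradicts the $\eps$-separation you started from. This is exactly the place where Walsh's finitary argument does real work, and it cannot be replaced by a soft compactness-plus-contradiction gesture.

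The paper avoids this problem entirely by \emph{not} trying to prove Cauchyness directly. Instead it introduces a concrete class of \emph{basic functions}: functions $g$ of the form $\sfE_\l\big(\prod_{i<k}\phi^i_{p_kp_i^{-1}}\cdot\phi^k_{p_k}\,\big|\,X\big)$ for a suitably invariant coupling $\l$ with a canonical process. Two separate propositions then do the work. First (Proposition~\ref{prop:basicconv}), if $f_k=g$ is basic, the averages $\L_N(f_1,\ldots,f_{k-1},g)$ genuinely \emph{converge}: one lifts to the coupling, uses $S^\a$-invariance and Lemma~\ref{lem:canon-rearrange} to rewrite the averages as $\bf p^\ast$-averages on an extended $\t G$-system (here Theorem~\ref{thm:recoverG} is invoked), and applies the inductive hypothesis. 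Second (Proposition~\ref{prop:corn}), if $\|\L_N(f_1,\ldots,f_k)\|_2\not\to 0$, then $f_k$ has nonzero inner product with some basic function; this is where the limit-of-couplings argument in the hybrid topology actually lives, and it produces a \emph{specific} basic function, not a contradiction. The proof concludes by writing $f_k = Pf_k + (f_k - Pf_k)$, where $P$ is orthogonal projection onto the closed span of basic functions: the second piece contributes averages tending to zero, and the first is approximated by basic functions, for which convergence is already known.

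So the coupling limit is used constructively to build a partially characteristic \emph{subspace}, not as the endpoint of a contradiction argument; and the inductive hypothesis is invoked to prove convergence for basic $f_k$, not merely to bound a $\limsup$. That reorganisation is the missing idea in your outline.
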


\section{Completing the induction}\label{sec:gen}

The proof of Theorem~\ref{thm:B} will be by induction on the complexity of the $G$-system $(p_1,\ldots,p_k)$.  For the associated sequence of averages as in Theorem~\ref{thm:B}, we will show that their convergence in $L^2(\mu)$ is implied by the convergence of some analogous averages for the reduction $\bf{p}^\ast$.  These latter averages will correspond to an action, not of $G$, but of the group $\t{G} \leq G^\bbZ$ defined as the smallest subgroup of $G^\bbZ$ which contains $\iota(G)$, contains $p_i$ for each $i$, and is globally $\a$-invariant. This $\t{G}$ is a countable group, and $\bf{p}^\ast$ defines a $\t{G}$-system, because every entry of $\bf{p}^\ast$ takes values in $\t{G}$.

The base case of the induction is that in which each $\bf{p}$ has complexity zero, which implies that each $p_i$ is a constant element of $G$.  In this case the averages of interest do not depend on $N$, so convergence is trivial.  Thus, we now fix $G$ and a $G$-system $(p_1,p_2,\ldots,p_k)$ of complexity at least $1$, and assume as our inductive hypothesis that the conclusion of Theorem~\ref{thm:B} is already known for any group action and any system whose complexity is less than that of $\bf{p}$.  By definition of complexity, after possibly re-ordering $\bf{p}$ and removing duplicated entries (neither of which can disrupt the conclusion of Theorem~\ref{thm:B}), we may assume that $\bf{p}^\ast$ has complexity strictly less than $\bf{p}$, so this inductive hypothesis appliess to it.

\subsection{Canonical processes and basic functions}

We will continue to write $\t{G}$ for the subgroup of $G^\bbZ$ defined above.  In addition, since $\a(\t{G}) = \t{G}$, we may define from $\t{G}$ the semi-direct product $\t{H}:=\t{G}\rtimes_\a \bbZ$, which we identify as $\t{G}\times \a^\bbZ$ with the product
\[(p,\a^n)\cdot (p',\a^{n'}):= (p\a^n(p'),\a^{n+n'}).\]
This may be identified with the group of permutation of $\t{G}$ generated by $\a$ and by the left-regular representation of $\t{G}$ on itself.  We write $\rho$ for this permutation representation of $\t{H} = \t{G}\times \a^\bbZ$, so
\[\rho(q,\a^n):p\mapsto q\a^n(p).\]

Now let $K$ be some auxiliary compact metric space. Adopting a term from probability theory, a \textbf{$K$-valued canonical process} will a probability measure $\nu$ on $K^{\t{G}}$ which is invariant under the coordinate-permuting action $S:\t{H}\actson K^{\t{G}}$ arising from $\rho$.  This is generated by the transformations
\begin{eqnarray}\label{eq:Sdef1}
S^r((y_q)_{q\in \t{G}}) := (y_{r^{-1}q})_{q\in \t{G}}
\end{eqnarray}
and
\[S^\a((y_q)_{q \in \t{G}}) := (y_{\a^{-1}(q)})_{q \in \t{G}}.\]\
We will write $S^\iota$ for the $G$-subaction on $K^{\t{G}}$ defined by $(S^\iota)^g := S^{\iota(g)}.$

Let $\phi_q:K^{\t{G}}\to K$ be the projection onto the coordinate indexed by $q \in \t{G}$.  In the sequel we will need the case $K = [-1,1]^k$, for which we correspondingly write $\phi_q = (\phi_q^1,\ldots,\phi_q^k)$.

For our purposes, the first important feature of canonical processes $(\phi_q)_q$ is that the orbits of the individual functions $\phi_q$ under the action of $\t{H}$ satisfy certain algebraic relations.

\begin{lem}\label{lem:canon-rearrange}
Suppose that $p,r \in \t{G}$ and $n \in \bbZ$.  Then
\[\phi_{rp^{-1}}\circ S^{\iota(r(n))}\circ S^{\a^n} = \phi_e\circ S^{\langle r|p\rangle(n)}.\]
\end{lem}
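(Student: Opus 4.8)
The identity is a direct computation, and the plan is simply to record how the coordinate projections $\phi_q$ transform under $S$ and then unwind the definitions; no measure is involved, so this is purely an identity between maps $K^{\t{G}}\to K$.

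First I would note that $S$ is the left permutation action of $\t{H}$ induced by $\rho$, so that for every $q\in\t{G}$ and every $h\in\t{H}$ one has
\[\phi_q\circ S^h = \phi_{\rho(h)^{-1}(q)}.\]
Indeed, this holds on the two families of generators by the defining formula~\eqref{eq:Sdef1} and the display just after it (for $h=r\in\t{G}$ we have $\rho(r)\colon q\mapsto rq$, and for $h=\a^n$ we have $\rho(\a^n)\colon q\mapsto\a^n(q)$), and it then extends to all of $\t{H}$ because both $S$ and $\rho$ are homomorphisms out of $\t{H}$ and $S$ is a genuine left action. For the same reason $S^{\iota(r(n))}\circ S^{\a^n}=S^h$ with $h:=(\iota(r(n)),\a^n)\in\t{H}=\t{G}\rtimes_\a\bbZ$, using the semidirect-product multiplication.

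Next I would apply the displayed transformation rule with $q=rp^{-1}$ and with this $h$. From $\rho(q_0,\a^{n_0})\colon q\mapsto q_0\a^{n_0}(q)$ one inverts to get $\rho(q_0,\a^{n_0})^{-1}\colon q'\mapsto\a^{-n_0}(q_0^{-1}q')$, whence
\[\phi_{rp^{-1}}\circ S^{\iota(r(n))}\circ S^{\a^n}=\phi_{\a^{-n}(\iota(r(n))^{-1}\cdot rp^{-1})}.\]
Since $\iota(r(n))$ is a constant $G$-sequence it is fixed by $\a$, so the index simplifies to $\iota(r(n))^{-1}\cdot\a^{-n}(rp^{-1})$. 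On the other hand, the displayed definition of the bracket gives $\langle r|p\rangle(n)=\a^{-n}(pr^{-1})\cdot\iota(r(n))$, hence $\langle r|p\rangle(n)^{-1}=\iota(r(n))^{-1}\cdot\a^{-n}(rp^{-1})$ — exactly that index. Applying the transformation rule once more, with $q=e$ and noting $\langle r|p\rangle(n)\in\t{G}$ so that $\rho$ here acts by left translation, gives $\phi_e\circ S^{\langle r|p\rangle(n)}=\phi_{\langle r|p\rangle(n)^{-1}}$, and comparing the two expressions proves the lemma.

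The argument has no substantial obstacle; the only point requiring care is the bookkeeping of inverses and the order of multiplication, since the bracket $\langle\cdot|\cdot\rangle$ is defined here with the multiplication reversed relative to Walsh's convention and the permutation action $S$ is built from $\rho(h)^{-1}$ rather than $\rho(h)$. I would therefore spell out the semidirect-product law and the inversion in $G^\bbZ$ explicitly at each step rather than leaving them implicit.
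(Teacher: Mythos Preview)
Your proposal is correct and is essentially the same direct computation as the paper's proof. The only difference is cosmetic: you first isolate the general rule $\phi_q\circ S^h=\phi_{\rho(h)^{-1}(q)}$ and then specialize it, whereas the paper carries out the same steps by evaluating both sides on a generic point $y=(y_q)_q$ and chasing indices through $S^{\a^n}$ and $S^{\iota(r(n))}$ in turn; both arrive at the index $\iota(r(n))^{-1}\a^{-n}(rp^{-1})=\langle r|p\rangle(n)^{-1}$ and conclude.
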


\begin{proof}
If $y = (y_q)_q \in K^{\t{G}}$, then
\begin{eqnarray*}
\phi_{rp^{-1}}\big(S^{\iota(r(n))}\big(S^{\a^n}((y_q)_q)\big)\big) &=& \phi_{rp^{-1}}\big(S^{\iota(r(n))}\big((y_{\a^{-n}(q)})_q\big)\big)\\
&=& \phi_{rp^{-1}}((y_{\iota(r(n))^{-1}\a^{-n}(q)})_q)\\
&=& y_{\iota(r(n))^{-1}\a^{-n}(rp^{-1})}\\
&=& y_{(\langle r|p\rangle (n))^{-1}e} = \phi_e\circ S^{\langle r|p\rangle(n)}((y_q)_q).
\end{eqnarray*}
\end{proof}

Canonical processes will appear in our main proof via the following notion.

\begin{dfn}\label{dfn:basic}
A \textbf{basic function} on $(X,\mu,T)$ is a function $g \in L^\infty(\mu)$ for which there exist
\begin{itemize}
 \item a canonical process $\nu \in \Pr^{\t{H}}(([-1,1]^k)^{\t{G}})$, and
\item a $(\mu,\nu)$-coupling $\l \in \Pr(X\times ([-1,1]^k)^{\t{G}})$ invariant under both $T\times S^\iota$ (the diagonal action of $G$) and $\rm{id}_X\times S^\a$,
\end{itemize}
such that
\begin{eqnarray}\label{eq:dfn-basic}
g = \sfE_\l\Big(\prod_{i=1}^{k-1}\phi^i_{p_kp_i^{-1}}\cdot \phi^k_{p_k}\,\Big|\,X\Big).
\end{eqnarray}
\end{dfn}

Basic functions constitute the analog in our setting of Walsh's `reducible functions'.  The connection with the notion of reducibility will appear in the proof of the following.

\begin{prop}\label{prop:basicconv}
Suppose that the convergence is known for any system of averages with complexity strictly less than that of $(p_1,\ldots,p_k)$, suppose that $f_1,\ldots,f_{k-1} \in L^\infty(\mu)$, and suppose that $g \in L^\infty(\mu)$ is a basic function.  Then the averages
\[\L_N(f_1,\ldots,f_{k-1},g) := \frac{1}{N}\sum_{n=1}^N (f_1\circ T^{p_1(n)}) \cdots (f_{k-1}\circ T^{p_{k-1}(n)})(g\circ T^{p_k(n)})\]
converge in $\|\cdot\|_2$ as $N \to \infty$.
\end{prop}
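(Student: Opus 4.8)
The plan is to realize the averages $\L_N(f_1,\ldots,f_{k-1},g)$ as (the $X$-components of) averages attached to a strictly simpler system, namely the reduction $\bf{p}^\ast$, viewed as a $\t{G}$-system, and then invoke the inductive hypothesis. First I would unpack the definition of the basic function $g$: by Definition~\ref{dfn:basic} there is a canonical process $\nu$ on $K^{\t{G}}$ with $K = [-1,1]^k$, and a coupling $\l \in Q(\mu, K^{\t{G}})$ that is invariant under both $T\times S^\iota$ and $\rm{id}_X \times S^\a$, such that $g$ is the conditional expectation onto $X$ of the product $F := \prod_{i=1}^{k-1}\phi^i_{p_kp_i^{-1}}\cdot \phi^k_{p_k}$. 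Since the averaging operators $\L_N$ only see $g$ through its values, and $g = \sfE_\l(F\mid X)$, the key first move is to lift everything to the coupling space: because $\l$ projects to $\mu$ and the relevant $G$-action on $X\times K^{\t{G}}$ is $T\times S^\iota$, we have
\[
\big(\L_N(f_1,\ldots,f_{k-1},g)\big)\circ\pi_X = \sfE_\l\Big(\tfrac1N\sum_{n=1}^N \big(f_1\circ T^{p_1(n)}\big)\cdots\big(f_{k-1}\circ T^{p_{k-1}(n)}\big)\cdot\big(F\circ S^{p_k(n)}\big)\,\Big|\,X\Big),
\]
using that the $f_i$ depend only on $X$ and that conditional expectation commutes with the $G$-action that preserves the factor. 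Since $\sfE_\l(\cdot\mid X)$ is an $L^2$-contraction, it suffices to prove that the averages inside, now of functions on the coupling space $(X\times K^{\t{G}}, \l)$, converge in $L^2(\l)$.

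Next I would rewrite $F\circ S^{p_k(n)}$ using Lemma~\ref{lem:canon-rearrange}. Applying that lemma with $r = p_k$ and $p = p_i$, the factor $\phi^i_{p_kp_i^{-1}}\circ S^{\iota(p_k(n))}\circ S^{\a^n}$ equals $\phi^i_e\circ S^{\langle p_k|p_i\rangle(n)}$, and similarly $\phi^k_{p_k}\circ S^{\iota(p_k(n))}\circ S^{\a^n} = \phi^k_e \circ S^{\langle p_k|p_k\rangle(n)} = \phi^k_e\circ S^{\iota(p_k(n))}$ (since $\langle p_k|p_k\rangle = \iota\circ p_k$). The extra composition with $S^{\a^n}$ is harmless: $\rm{id}_X\times S^\a$ preserves $\l$, so inserting $S^{\a^n}$ inside a $\l$-integral changes nothing, and more to the point I can apply the whole averaging operator after conjugating by $S^{\a^n}$ without affecting $L^2(\l)$-norms. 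After this rewriting, the averages become
\[
\frac1N\sum_{n=1}^N \big(\t f_1\circ U^{\iota(p_1(n))}\big)\cdots\big(\t f_{k-1}\circ U^{\iota(p_{k-1}(n))}\big)\big(\phi^1_e\circ U^{\langle p_k|p_1\rangle(n)}\big)\cdots\big(\phi^k_e\circ U^{\iota(p_k(n))}\big),
\]
where $U := T\times S^\iota$ extended appropriately, the $\t f_i$ are the pullbacks of $f_i$ to the coupling space, and — crucially — all the group elements $\iota(p_1(n)),\ldots,\iota(p_{k-1}(n)),\ \langle p_k|p_1\rangle(n),\ldots,\ \langle p_k|p_{k-1}\rangle(n),\ \iota(p_k(n))$ lie in $\t G \leq G^\bbZ$, and the tuple of $\t G$-sequences they form is exactly (a reordering of) the reduction $\bf{p}^\ast$ — or rather, is subordinate to it, since some entries coincide and the $\iota\circ p_i$ for $i<k$ match the first block of $\bf{p}^\ast$ while the $\langle p_k|p_i\rangle$ match the second block. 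The action of $\t G$ (indeed of $\t H = \t G\rtimes_\a\bbZ$) on the coupling space that realizes all these compositions is well-defined precisely because $\l$ is invariant under both $T\times S^\iota$ and $\rm{id}_X\times S^\a$; here is where one may also need Theorem~\ref{thm:recoverG} to promote the joint $G$-and-$\a$ invariance to an honest action of the group they generate inside $G^\bbZ\rtimes\a^\bbZ$, so that "$U^{\langle p_k|p_i\rangle(n)}$" makes literal sense.

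With the averages now exhibited as nonconventional averages for the $\t G$-system $\bf{p}^\ast$ acting on the coupling system, I would invoke the standing inductive hypothesis: $\bf{p}^\ast$ has complexity strictly less than that of $\bf{p}$ (this was arranged, after reordering and deletion of duplicates, in the setup of Section~\ref{sec:gen}), so by the hypothesis of the proposition these averages converge in $L^2(\l)$. Pushing down through $\sfE_\l(\cdot\mid X)$ gives convergence of $\L_N(f_1,\ldots,f_{k-1},g)\circ\pi_X$ in $L^2(\l)$, which is the same as convergence of $\L_N(f_1,\ldots,f_{k-1},g)$ in $L^2(\mu)$. I expect the main obstacle to be bookkeeping rather than conceptual: one must check carefully that the composite transformations $S^{\iota(p_k(n))}\circ S^{\a^n}$ can legitimately be absorbed (using $\rm{id}_X\times S^\a$-invariance of $\l$ and the fact that $f_i\circ T^{p_i(n)}$ is unaffected by $S^\a$ since it lives on $X$), and that the resulting sequences genuinely assemble into the $\t G$-system $\bf{p}^\ast$ with no complexity increase — in particular that the $m$-by-$m$ brackets Walsh keeps separate really are packaged by the single map $\langle p_k|p_i\rangle:\bbZ\to\t G$ built into the definition of $\bf{p}^\ast$. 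Everything else (contractivity of conditional expectation, invariance allowing the various coordinate moves, pulling functions up to and down from the coupling) is routine.
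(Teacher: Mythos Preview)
Your approach is essentially that of the paper, and the architecture is right: lift to the coupling, insert $\t{S}^{\a^n}$ term-by-term inside the conditional expectation using $(\rm{id}_X\times S^\a)$-invariance of $\l$ (and $\t S^\a$-invariance of the lifted $\t f_i$), apply Lemma~\ref{lem:canon-rearrange}, and then invoke the inductive hypothesis on $\bf{p}^\ast$ after passing to a $\t G$-extension via Theorem~\ref{thm:recoverG}. Two slips, both of the bookkeeping kind you anticipated.

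First, your application of Lemma~\ref{lem:canon-rearrange} to the last factor is off. The subscript of $\phi^k$ is $p_k$, so in the lemma you need $rp^{-1} = p_k$ with $r = p_k$, hence $p = e$; this gives $\phi^k_{p_k}\circ S^{\iota(p_k(n))}\circ S^{\a^n} = \phi^k_e\circ S^{\langle p_k|e\rangle(n)}$, not $\phi^k_e\circ S^{\langle p_k|p_k\rangle(n)} = \phi^k_e\circ S^{\iota(p_k(n))}$. This matters: $\langle p_k|e\rangle$ is one of the entries of $\bf{p}^\ast$, whereas $\iota\circ p_k$ is not, so with your version the resulting tuple is not $\bf{p}^\ast$ and you cannot cite the assumed complexity drop.

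Second, the reason Theorem~\ref{thm:recoverG} is needed is not to promote ``joint $G$-and-$\a$ invariance'' to the group they generate (that group is only $\iota(G)\rtimes\a^\bbZ$, still far from $\t G$), but because on $(X\times Y,\l)$ there is no $\t G$-action at all: the brackets $\langle p_k|p_i\rangle(n)\in\t G$ act only on the $Y$-coordinate via $S$, while the $\t f_i$ genuinely depend on $X$. The paper applies Theorem~\ref{thm:recoverG} with $H=\iota(G)\leq\t G$ to the $\iota(G)$-extension $\pi_Y:(X\times Y,\l,T\times S^\iota)\to(Y,\nu,S^\iota)$, producing a single $\t G$-space $(Z,\theta,R)$ on which the rewritten averages become honest $\bf{p}^\ast$-averages, and only then does the inductive hypothesis apply.
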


\begin{proof}
Let $g$ be as in~(\ref{eq:dfn-basic}) with canonical process $\nu$ and coupling $\l$, and let
\[\t{g} := \prod_{i=1}^{k-1}\phi^i_{p_kp_i^{-1}} \cdot \phi^k_{p_k},\]
so $g = \sfE_\l(\t{g}\,|\,X)$.  Also, let $Y := ([-1,1]^k)^{\t{G}}$; let
\[\t{T}:= T\times S^\iota:G\actson X\times Y \quad \hbox{and} \quad \t{S}^\a := \rm{id}_X\times S^\a \actson X\times Y;\]
let $\pi_X$ and $\pi_Y$ be the two coordinate-projections of $X\times Y$; and let $\t{f}_j := f_j\circ \pi_X$ for $j \leq k-1$. Then $\l$ defines an extension of $G$-spaces through the first coordinate projection:
\[\pi_X:(X\times Y,\l,\t{T}) \to (X,\mu,T).\]
Let $\t{\L}_N$ be the averages analogous to $\L_N$ defined on the extended system.  Then we have
\begin{multline}\label{eq:cond-exp}
\L_N(f_1,\ldots,f_{k-1},g) = \sfE_\l(\t{\L}_N(\t{f}_1,\ldots,\t{f}_{k-1},g\circ \pi_X)\,|\,X)\\ = \sfE_\l(\t{\L}_N(\t{f}_1,\ldots,\t{f}_{k-1},\t{g})\,|\,X).
\end{multline}

Since $\l$ is $\t{S}^\a$-invariant, one has
\[\sfE_\l(F\,|\,X) = \sfE_\l(F\circ \t{S}^{\a^n}\,|\,X) \quad \forall n \in \bbZ,\ F\in L^1(\l).\]
On the other hand, each of the functions $\t{f}_j$ is lifted through $\pi_X$, hence is invariant under $\t{S}^\a$.  Substituting the definition of $\t{\L}_N$ into the right-hand side of~(\ref{eq:cond-exp}) and applying these two facts, one obtains that that conditional expectation is equal to
\begin{eqnarray*}
&&\frac{1}{N}\sum_{n=1}^N \sfE_\l\big((\t{f}_1\circ \t{T}^{p_1(n)})\cdots (\t{f}_{k-1}\circ \t{T}^{p_{k-1}(n)})(\t{g}\circ \t{T}^{p_k(n)})\,\big|\,X\big)\\
&&= \frac{1}{N}\sum_{n=1}^N \sfE_\l\big(((\t{f}_1\circ \t{T}^{p_1(n)}) \cdots (\t{f}_{k-1}\circ \t{T}^{p_{k-1}(n)})(\t{g}\circ \t{T}^{p_k(n)}))\t{S}^{\a^n}\,\big|\,X\big)\\
&&= \frac{1}{N}\sum_{n=1}^N \sfE_\l((\t{f}_1\circ \t{T}^{p_1(n)}) \cdots (\t{f}_{k-1}\circ \t{T}^{p_{k-1}(n)})(\t{g}\circ \t{T}^{p_k(n)}\circ \t{S}^{\a^n})\,|\,X).
\end{eqnarray*}
Since $\sfE_\l(\,\cdot\,|\,X): L^2(\l)\to L^2(\mu)$ is a contraction, it therefore suffices to prove that the averages
\[\Delta_N := \frac{1}{N}\sum_{n=1}^N (\t{f}_1\circ \t{T}^{p_1(n)}) \cdots (\t{f}_{k-1}\circ \t{T}^{p_{k-1}(n)})(\t{g}\circ \t{T}^{p_k(n)}\circ \t{S}^{\a^n})\]
converge in $L^2(\l)$ as $N \to \infty$.

However, now observe that
\begin{eqnarray*}
(\t{g}\circ \t{T}^{p_k(n)}\circ \t{S}^{\a^n}) (x,y) &=& \prod_{i=1}^{k-1}\phi^i_{p_kp_i^{-1}}(S^{\iota(p_k(n))}S^{\a^n}y)\cdot \phi^k_{p_k}(S^{\iota(p_k(n))}S^{\a^n}y)\\
&=& \prod_{i=1}^{k-1}\phi^i_e(S^{\langle p_k|p_i\rangle(n)}y)\cdot \phi^k_e(S^{\langle p_k|e\rangle(n)}y),
\end{eqnarray*}
by Lemma~\ref{lem:canon-rearrange}.

Therefore
\[\Delta_N = \frac{1}{N}\sum_{n=1}^N\prod_{i=1}^{k-1}(\t{f}_i\circ \t{T}^{p_i(n)})\cdot\prod_{i=1}^{k-1} (\phi^i_e \circ S^{\langle p_k|p_i\rangle(n)}\circ \pi_Y)\cdot (\phi^k_e\circ  S^{\langle p_k|e\rangle(n)}\circ\pi_Y).\]
Finally, applying Theorem~\ref{thm:recoverG} gives an extension $(Z,\theta,R)\stackrel{\xi}{\to} (Y,\nu,S^{\uhr\t{G}})$ of $\t{G}$-spaces for which there is a commutative diagram of $G$-spaces
\begin{center}
$\phantom{i}$\xymatrix{
(Z,\theta,R^{\uhr \iota(G)}) \ar^{\xi}[rr]\ar_\a[dr] && (Y,\nu,S^\iota)\\
& (X\times Y,\l,T\times S^\iota)\ar_{\pi_Y}[ur].
}
\end{center}

Lifting the averages $\Delta_N$ through $\a$ to the $\t{G}$-space $(Z,\theta,R)$, they become a sequence of multiple averages corresponding to the $\t{G}$-system $\bf{p}^\ast$, which we assumed has strictly smaller complexity than $\bf{p}$, so this convergence follows from the inductive hypothesis. (This last appeal to Theorem~\ref{thm:recoverG} is needed because, in the expression for $\Delta_N$ before making this extension, $\t{f}_i$ is not lifted from a function on $Y$ alone, but on the other hand $\langle p_k|p_i\rangle(n)$ acts only on the $Y$-coordinate.)
\end{proof}

\subsection{Completion of the proof}

\begin{prop}\label{prop:corn}
Suppose that $f_j \in L^\infty(\mu)$ for all $j=1,\ldots,k-1$, that $f_k \in L^2(\mu)$, and that
\[\|\L_N(f_1,\ldots,f_k)\|_2 \notto 0 \quad \hbox{as}\ N\to\infty.\]
Then there is a basic function $g \in L^\infty(\mu)$ such that $\langle f_k,g\rangle \neq 0$.
\end{prop}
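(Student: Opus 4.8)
The plan is to run exactly the kind of energy-increment/van der Corput argument that underlies Walsh's passage from the non-vanishing of the averages to the existence of a reducible function, but organised through couplings so that the limiting object lives naturally on a canonical process. First I would invoke a standard van der Corput inequality: if $\|\L_N(f_1,\dots,f_k)\|_2\notto 0$, then after passing to a subsequence the inner products $\langle\L_N(\bs f),\L_N(\bs f)\rangle$ stay bounded away from $0$, and expanding the square and shifting the index one finds that
\[
\liminf_{H\to\infty}\ \frac1{H}\sum_{h=1}^{H}\Big|\Big\langle \text{(a multiple average in the $2k$-tuple obtained by superposing $\bs f$ with its $T^{p_k(n+h)p_k(n)^{-1}}$-shift)}\Big\rangle\Big|
\]
is positive; more precisely one produces, along a sequence of $h$'s of positive density, multiple averages $\L_N(f_1,\dots,f_{k-1},F_h)$ whose correlation with $f_k$ is bounded below, where $F_h$ itself is built from a $(k-1)$-fold average of the lower-index functions against $f_k$-type data and the relevant $D_h p_i$ and $\langle p_k\mid p_i\rangle(h)$ appear. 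The combinatorics here is precisely Walsh's, only written multiplicatively; the point is that after one van der Corput step the ``difficult'' function in the last slot has been replaced by something that is an average of products indexed by $h\in\bbZ$, with the index-permutation structure of $\t G$ already visible.

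The second and central step is to repackage this family of functions, indexed by $h$, as the conditional expectation of a single function on a coupling with a canonical process. Concretely: the functions $\phi^k_{p_k}$, $\phi^i_{p_kp_i^{-1}}$ on $([-1,1]^k)^{\t G}$ are designed (via Lemma~\ref{lem:canon-rearrange}) so that their $S$-orbits reproduce exactly the shifts $S^{\langle p_k\mid p_i\rangle(n)}$ that arose above. I would therefore build a measure $\l$ on $X\times([-1,1]^k)^{\t G}$ by pushing forward, along a weak-$*$ (hybrid-topology) limit point of empirical measures of the form
\[
\frac1{N}\sum_{n=1}^N (\t T\times\t S^\a)^n_\ast\big(\mu\otimes\delta_{(\text{point encoding the shifted copies of }f_1,\dots,f_k)}\big),
\]
so that $\l$ is automatically $T\times S^\iota$- and $\mathrm{id}_X\times S^\a$-invariant and its $Y$-marginal $\nu$ is a canonical process. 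Continuity of the relevant functionals on $Q(\mu,Y)$ in the hybrid topology (noted in Section~\ref{sec:background}) is exactly what makes the limit of the correlations survive, even though $f_k\in L^2$ need not be continuous. Then $g:=\sfE_\l(\t g\mid X)$ is by construction a basic function, and the lower bound on the correlations passes to the limit to give $\langle f_k,g\rangle\neq 0$.

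The main obstacle I expect is the bookkeeping in the second step: arranging the empirical measures so that the limit genuinely records the product $\prod_{i<k}\phi^i_{p_kp_i^{-1}}\cdot\phi^k_{p_k}$ rather than some less useful function, and verifying that the $\t H$-invariance one gets in the limit is the full invariance demanded in Definition~\ref{dfn:basic} (both under the diagonal $G$-action and under $\mathrm{id}_X\times S^\a$) and not merely invariance under the single transformation one was iterating. This requires choosing the coupling carefully — typically one first builds an invariant coupling for the cyclic group generated by $(T^{?},S^\a)$ and then averages, or appeals to a Krylov--Bogolyubov argument on the (compact, by the hybrid topology) set $Q(\mu,Y)$ to upgrade to a genuinely $\t H$-invariant $\l$. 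The rest — the van der Corput estimate, the reindexing via Lemma~\ref{lem:canon-rearrange}, and extracting the nonzero inner product — is routine once the correct coupling is in hand. Finally one notes that the function $g$ produced lies in $L^\infty(\mu)$ because $\t g$ is bounded by $1$ and conditional expectation is a contraction on $L^\infty$.
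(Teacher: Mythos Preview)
Your high-level architecture --- build empirical couplings of $\mu$ with measures on $([-1,1]^k)^{\t G}$, pass to a hybrid-topology limit, and read off $g=\sfE_\l(\t g\mid X)$ --- matches the paper's. But two of your concrete choices do not lead where you need to go, and the paper makes a different move at each.

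First, the van der Corput step is a detour: the paper never applies it. From $\|\L_N(f_1,\dots,f_k)\|_2\not\to 0$ it instead chooses a bounded $f''$ close to $f_k$ in $L^2$, sets $A_N:=\L_N(f_1,\dots,f_{k-1},f'')$, and observes that $\langle\L_{N_i}(f_1,\dots,f_k),A_{N_i}\rangle\to\delta>0$ along a subsequence. After shifting each term by $T^{p_k(n)^{-1}}$ this is already exactly of the shape $\int_{X\times Y} f_k\cdot\t g\,\d\l_{N_i}$ once the empirical coupling is defined correctly; no extra parameter $h$ appears. Your vdC expansion would instead produce a $2k$-fold product depending on a further shift $h$, and it is not clear how to reassemble that into a single function of the specific form $\prod_{j<k}\phi^j_{p_kp_j^{-1}}\cdot\phi^k_{p_k}$ demanded by Definition~\ref{dfn:basic}.

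Second, and more seriously, your plan for the coupling --- iterate a single map on $\mu\otimes\delta_{(\rm{point})}$ and then upgrade via Krylov--Bogolyubov --- does not distinguish the invariances required of $\l$ from those required of its marginal $\nu$, and would not deliver the latter. The coupling $\l$ need only be $(T\times S^\iota)$- and $(\rm{id}_X\times S^\a)$-invariant; it is the \emph{marginal} $\nu$ on $Y$ that must be invariant under all of $S^{\t G}$. Averaging $\l$ over $\t G$ to force this would destroy its $X$-marginal, since $\l$ has no reason to be $(\rm{id}_X\times S^r)$-invariant for general $r\in\t G$. The paper's key idea is instead to define $\l_{N_i}$ so that at every coordinate $q\in\t G$ the $Y$-component is
\[\big(f_1(T^{q(n)^{-1}}x),\dots,f_{k-1}(T^{q(n)^{-1}}x),\,A_{N_i}(T^{q(n)^{-1}}x)\big),\]
averaged over $n\leq N_i$ and then over $x$. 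Because the function placed in each slot is independent of both $q$ and $n$, applying $S^r$ to the $Y$-component replaces $T^{q(n)^{-1}}x$ by $T^{q(n)^{-1}r(n)}x$, and integrating over $x$ against the $T^{r(n)}$-invariant $\mu$ returns the same marginal for every single $n$. This is why $\nu$ is $S^{\t G}$-invariant --- and it works precisely because the $k$-th slot carries the bounded average $A_{N_i}$ rather than $f_k$ itself. With this construction the required invariances of $\l$ are either exact already for $\l_{N_i}$ (the $T\times S^\iota$-invariance) or hold up to $O(1/N_i)$ via the F\o lner property of $[1,N_i]$ (the $\rm{id}_X\times S^\a$-invariance); no Krylov--Bogolyubov step is needed.
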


Effectively, this proposition shows that the orthogonal projection onto the subspace of $L^2(\mu)$ generated by all the basic functions is `partially characteristic', in the terminology of~\cite[Section 3]{FurWei96}.

\begin{proof}
The desired correlation will follow if we find instead a canonical process $\nu$ and a $(\mu,\nu)$-coupling $\l$ as in Definition~\ref{dfn:basic} such that
\[\int_{X\times Y} f_k(x)\t{g}(y)\,\l(\d x,\d y) \neq 0,\]
where
\begin{eqnarray}\label{eq:gtilde}
\t{g} := \prod_{j=1}^{k-1}\phi^i_{p_kp_j^{-1}}\cdot \phi^k_{p_k}.
\end{eqnarray}

\vspace{7pt}

\emph{Step 1.}\quad Multiplying by constants if necessary, we may assume that $\|f_j\|_\infty \leq 1$ for each $j \leq k-1$.  Having done so, for any $f' \in L^2(\mu)$ and $N\geq 1$, one has
\[\|\L_N(f_1,\ldots,f_{k-1},f_k) - \L_N(f_1,\ldots,f_{k-1},f')\|_2 \leq \|f_k - f'\|_2.\]
Therefore, if we choose $f' \in L^\infty(\mu)$ so that $\|f_k - f'\|_2$ is sufficiently small, then our assumption of non-convergence to zero also implies
\[\limsup_{N\to\infty}\big\langle \L_N(f_1,\ldots,f_{k-1},f_k),\L_N(f_1,\ldots,f_{k-1},f')\big\rangle > 0.\]
Now letting $f''$ be a sufficiently small scalar multiple of $f'$, we may assume this nonzero limit supremum with $f'$ replaced by $f''$, and with $\|f''\|_\infty \leq 1$.

Let $A_N := \L_N(f_1,\ldots,f_{k-1},f'')$ for each $N$.

\vspace{7pt}

\emph{Step 2.}\quad Writing out the above inner products more completely, we obtain a subsequence $N_1 < N_2 < \ldots$ and some $\delta > 0$ such that
\begin{eqnarray*}
&& \langle \L_{N_i}(f_1,\ldots,f_k),A_{N_i}\rangle\\ &&= \frac{1}{N_i}\sum_{n=1}^{N_i} \int(f_1\circ T^{p_1(n)})\cdots (f_k\circ T^{p_k(n)})\cdot A_{N_i}\,\d\mu\\
&&= \int f_k\cdot \Big(\frac{1}{N_i}\sum_{n=1}^{N_i} \Big(\prod_{j=1}^{k-1}(f_j\circ T^{p_j(n)p_k(n)^{-1}})\cdot (A_{N_i}\circ T^{p_k(n)^{-1}})\Big)\Big)\,\d\mu\\
&&\to \delta.
\end{eqnarray*}

We will turn this into the desired correlation with $\t{g}$ by interpreting these last integrals as correlations with respect to a sequence of approximate couplings, from which we will then obtain $\l$ as a subsequential limit.

Thus, for each $N$, consider the measure on $X\times Y$ defined by
\[\l_{N_i} := \int_X \frac{1}{N}\sum_{n=1}^N \delta_{\big(x,\,(f_1(T^{q(n)^{-1}}x),\ldots,f_{k-1}(T^{q(n)^{-1}}x),A_{N_i}(T^{q(n)^{-1}}x))_{q\in \t{G}}\big)}\,\mu(\d x).\]
In terms of these, a simple re-arrangement gives
\[\langle \L_{N_i}(f_1,f_2,\ldots,f_k),A_{N_i}\rangle = \int_{X\times Y}f_k\cdot \t{g}\,\d\l_{N_i},\]
where $\t{g}$ is as in~(\ref{eq:gtilde}).

Clearly each $\l_{N_i}$ has marginal $\mu$ on $X$, so this is a sequence in $Q(\mu,Y)$.  By replacing $(N_i)_i$ with a subsequence if necessary, we may therefore assume that $\L_{N_i}\to \l \in Q(\mu,Y)$ in the hybrid topology of Section~\ref{sec:background}.  By the definition of that topology, our assumption of non-convergence to zero now implies
\[\int_{X\times Y}f_k\cdot \t{g}\,\d\l_{N_i} \to \int_{X\times Y}f_k\cdot \t{g}\,\d\l = \delta \neq 0.\]
Thus, letting $\nu$ be the marginal of $\l$ on $Y$, it remains to prove the following:
\begin{itemize}
\item[i)] $\l$ is $(T^g\times S^{\iota(g)})$-invariant for all $g\in G$;
\item[ii)] $\l$ is $(\rm{id}\times S^\a)$-invariant;
\item[iii)] $\nu$ is $S$-invariant.
\end{itemize}

\vspace{7pt}

\emph{Step 3.(i).}\quad For any $g \in G$, we have
\begin{eqnarray*}
&&(T^g\times S^{\iota(g)})_\ast \l_{N_i}\\
&&= \int_X \frac{1}{N_i}\sum_{n=1}^{N_i} \delta_{\big(T^gx,S^{\iota(g)}\big((f_1(T^{q(n)^{-1}}x),\ldots,f_{k-1}(T^{q(n)^{-1}}x),A_{N_i}(T^{q(n)^{-1}}x))_q\big)\big)}\,\mu(\d x)\\
&&= \int_X \frac{1}{N_i}\sum_{n=1}^{N_i} \delta_{\big(T^gx,(f_1(T^{q(n)^{-1}}T^gx),\ldots,f_{k-1}(T^{q(n)^{-1}}T^gx),A_{N_i}(T^{q(n)^{-1}}T^gx))_q\big)}\,\mu(\d x)\\
&&= \l_{N_i},
\end{eqnarray*}
where the second equality results from~(\ref{eq:Sdef1}) and the third from the $T^g$-invariance of $\mu$.  This invariance now persists under taking the hybrid limit.

\vspace{7pt}

\emph{Step 3.(ii).}\quad On the other hand,
\begin{eqnarray*}
&&(\rm{id}\times S^\a)_\ast \l_{N_i}\\
&&= \int_X \frac{1}{N_i}\sum_{n=1}^{N_i} \delta_{\big(x,(f_1(T^{(\a^{-1} q)(n)^{-1}}x),\ldots,f_{k-1}(T^{(\a^{-1} q)(n)^{-1}}x),A_{N_i}(T^{(\a^{-1} q)(n)^{-1}}x))_q\big)}\,\mu(\d x)\\
&&= \int_X \frac{1}{N_i}\sum_{n=1}^{N_i} \delta_{\big(x,(f_1(T^{q(n+1)^{-1}}x),\ldots,f_{k-1}(T^{q(n+1)^{-1}}x),A_{N_i}(T^{q(n+1)^{-1}}x))_q\big)}\,\mu(\d x),
\end{eqnarray*}
this time using the definition of $\a$.  Therefore
\[\|(\rm{id}\times S^\a)_\ast \l_{N_i} - \l_{N_i}\|_{\rm{TV}} = \rm{O}(1/N_i),\]
using the F\o lner property of the discrete intervals $[N_i]$, so in the limit the measure $\l$ is $(\rm{id} \times S^\a)$-invariant.

\vspace{7pt}

\emph{Step 3.(iii).}\quad Letting $\nu_{N_i}$ be the marginal of $\l_{N_i}$ on $Y$, the definition of $\l_{N_i}$ gives
\[\nu_{N_i} := \int_X \frac{1}{N_i}\sum_{n=1}^{N_i} \delta_{(f_1(T^{q(n)^{-1}}x),\ldots,f_{k-1}(T^{q(n)^{-1}}x),A_{N_i}(T^{q(n)^{-1}}x))_q}\,\mu(\d x).\]
Therefore, if $r \in \t{G}$, then
\[S^r_\ast\nu_{N_i} = \frac{1}{N_i}\sum_{n=1}^{N_i} \int_X \delta_{(f_1(T^{q(n)^{-1}r(n)}x),\ldots,f_{k-1}(T^{q(n)^{-1}r(n)}x),A_{N_i}(T^{q(n)^{-1}r(n)}x))_q}\,\mu(\d x),\]
and each integral in the right-hand average equals $\nu_{N_i}$ because $T^{r(n)}$ preserves $\mu$ for each $n$.  Therefore $\nu = \lim_i\nu_{N_i}$ is $S^{\t{G}}$-invariant.

On the other hand, the $S^\a$-invariance of $\nu$ follows from the fact that $\l$ is $(\rm{id}\times S^\a)$-invariant.
\end{proof}

\begin{proof}[Proof of Theorem~\ref{thm:A}]
Following the remarks at the beginning of this section, we need only show how the induction closes on itself for $(p_1,\ldots,p_k)$.  Let $V$ be the closed subspace of $L^2(\mu)$ generated by all the basic functions, and let $P:L^2(\mu)\to V$ be the orthogonal projection.  By multilinearity,
\[\L_N(f_1,f_2,\ldots,f_k) = \L_N(f_1,f_2,\ldots,Pf_k) + \L_N(f_1,f_2,\ldots,f_k - Pf_k).\]

The function $f_k - Pf_k$ is orthogonal to all basic functions, to the second term on the right must tend to zero in $L^2(\mu)$, by Proposition~\ref{prop:corn}.  On the other hand, $Pf_k$ may be approximated in $\|\cdot\|_2$ by finite linear combinations of basic functions.  Therefore, using multilinearity and a simple approximation, it suffices to prove convergence when $f_k$ is itself a basic function. This was the content of Proposition~\ref{prop:basicconv}.
\end{proof}

\begin{rmks}
\emph{1.}\quad As remarked previously, Proposition~\ref{prop:corn} is effectively proving that the closed subspace of $L^2(\mu)$ generated by the basic functions is partially characteristic for the averages $\L_N$.  It is worth contrasting this with previous uses of this idea, starting implicitly with Furstenberg's original work~\cite{Fur77} on Szemer\'edi's Theorem, and explicitly with~\cite{FurWei96}.  As far as I know, in all of those earlier works, the partially characteristic closed subspaces of $L^2(\mu)$ that appear are actually the subspaces of functions measurable with respect to a partially characteristic \emph{$\s$-subalgebra} (usually, but not always, a factor: see~\cite{Aus--commute}) of $(X,\mu)$.  However, this may not be the case in our setting.  This is because, given two basic functions, they may be defined in terms of two different couplings, and so it is not clear that their product is still a basic function.  I do not see any easy way to combine those two defining couplings into a single coupling that gives the product.  Therefore the space of bounded basic functions may not form an algebra, as it would if this subspace where defined by measurability with respect to some $\s$-subalgebra.  On the other hand, I also do not see how to generalize the proof of Proposition~\ref{prop:basicconv} to the case in which $g$ is a product of more than one basic function.  Thus, it seems to be important that we work with precisely the partially characteristic subspace spanned by the basic functions, and not, say, the $\s$-algebra that it generated.

\vspace{7pt}

\emph{2.}\quad The class of basic functions is quite mysterious.  The key to our re-incarnation of Walsh's proof is that their soft definition in terms of couplings with canonical processes is enough to simplify the averages of interest.  However, earlier ergodic-theoretic works on non-conventional averages have sought to give also a description of the possible limits of those averages, and of the factors of the original system that are responsible for them.  This can then be useful, for example, for proving new multiple recurrence phenomena.  Most famously, the results of Host and Kra in~\cite{HosKra05} give a fairly complete description for powers of a fixed transformation in terms of rotations on nilmanifolds.  I suspect that among the objects that appear in the proof above, the key to obtaining more structural information about basic functions is in describing canonical processes themselves.  Most crucially, it is not clear what constraints are imposed on the structure of a $\t{G}$-indexed, stationary stochastic process by assuming that its law is also invariant under $S^\a$. \fin
\end{rmks}

\bibliographystyle{abbrv}
\bibliography{bibfile}

\begin{thebibliography}{10}

\bibitem{Ass10}
I.~Assani.
\newblock Pointwise convergence of ergodic averages along cubes.
\newblock {\em J. Anal. Math.}, 110:241--269, 2010.

\bibitem{Aus--commute}
T.~Austin.
\newblock Non-conventional ergodic averages for several commuting actions of an
  amenable group.
\newblock Preprint, available online at \verb|arXiv.org|: 1309.4315.

\bibitem{BerLei02}
V.~Bergelson and A.~Leibman.
\newblock A nilpotent {R}oth theorem.
\newblock {\em Invent. Math.}, 147(2):429--470, 2002.

\bibitem{Bou90}
J.~Bourgain.
\newblock Double recurrence and almost sure convergence.
\newblock {\em J. Reine Angew. Math.}, 404:140--161, 1990.

\bibitem{ChuFra12}
Q.~Chu and N.~Frantzikinakis.
\newblock Pointwise convergence for cubic and polynomial multiple ergodic
  averages of non-commuting transformations.
\newblock {\em Ergodic Theory Dynam. Systems}, 32(3):877--897, 2012.

\bibitem{Fur77}
H.~Furstenberg.
\newblock Ergodic behaviour of diagonal measures and a theorem of {S}zemer\'edi
  on arithmetic progressions.
\newblock {\em J. d'Analyse Math.}, 31:204--256, 1977.

\bibitem{FurWei96}
H.~Furstenberg and B.~Weiss.
\newblock A mean ergodic theorem for
  $\frac{1}{N}\sum_{n=1}^{N}f({T}^nx)g({T}^{n^2}x)$.
\newblock In V.~Bergleson, A.~March, and J.~Rosenblatt, editors, {\em
  Convergence in Ergodic Theory and Probability}, pages 193--227. De Gruyter,
  Berlin, 1996.

\bibitem{Gla03}
E.~Glasner.
\newblock {\em Ergodic {T}heory via {J}oinings}.
\newblock American {M}athematical {S}ociety, {P}rovidence, 2003.

\bibitem{HosKra05}
B.~Host and B.~Kra.
\newblock Nonconventional ergodic averages and nilmanifolds.
\newblock {\em Ann. Math.}, 161(1):397--488, 2005.

\bibitem{Tao08(nonconv)}
T.~Tao.
\newblock Norm convergence of multiple ergodic averages for commuting
  transformations.
\newblock {\em Ergodic Theory and Dynamical Systems}, 28:657--688, 2008.

\bibitem{Walsh12}
M.~N. Walsh.
\newblock Norm convergence of nilpotent ergodic averages.
\newblock {\em Ann. of Math. (2)}, 175(3):1667--1688, 2012.

\bibitem{Zor13}
P.~Zorin-Kranich.
\newblock Norm convergence of nilpotent ergodic averages along {F}\o lner nets.
\newblock Preprint, available online at \verb|arXiv.org|: 1111.7292.

\end{thebibliography}

\end{document}